\documentclass[reqno]{amsart}
\usepackage{graphicx}
\usepackage{amssymb,amsmath}
\usepackage{amsthm}
\usepackage{amssymb,amsmath}
\usepackage{amsthm}
\usepackage{amsfonts}
\usepackage[mathscr,mathcal]{eucal}

\vfuzz2pt 
\hfuzz2pt 
\newtheorem{thm}{Theorem}[section]

\newtheorem{lem}[thm]{Lemma}

\theoremstyle{definition}

\theoremstyle{remark}

\numberwithin{equation}{section}


\newcommand{\ce}{\mathcal{C}}
\newcommand{\N}{\mathcal{N}}
\newcommand {\la}{\langle}
\newcommand {\ra}{\rangle}

\begin{document}

\title[ maximal subset of pairwise
 non-commuting elements ]{maximal subset of pairwise
 non-commuting elements of finite minimal non-abelian groups}%
\author[ S. Fouladi,  R. Orfi and A. Azad]{  S. Fouladi ,  R. Orfi and A. Azad }%

 \address{ Department of Mathematics,  Faculty of Science, Arak University, Arak 38156-8-8349, Iran.}
 
\email{ a-azad@araku.ac.ir} 
  
 \address{Faculty of Mathematical Sciences and Computer, Kharazmi University,
 50 Taleghani Ave., Tehran 1561836314, Iran.}
 \email{ s\_ fouladi@khu.ac.ir}
 
  \address{Faculty of Mathematical Sciences and Computer, Kharazmi University,
 50 Taleghani Ave., Tehran 1561836314, Iran.}
 \email{ orfi@khu.ac.ir}

\subjclass[2000]{20D15, 20D60}%
\keywords{Finite $p$-group, Minimal non-abelian group, AC-group,}
\date{\today}

\begin{abstract} Let $G$ be a group. A subset $X$ of $G$ is a set of pairwise non-commuting elements
if $xy\not=yx$ for any two distinct elements $x$ and $y$ in $X$.
If $|X|\geq |Y|$ for any other set of pairwise non-commuting
elements $Y$ in $G$, then $X$ is said to be a maximal subset of
pairwise non-commuting elements. In this paper we determine the
cardinality of a maximal subset of pairwise non-commuting elements
for finite minimal non-abelian groups.
\end{abstract}

\maketitle

\section{\Large{Introduction}}

\vspace*{0.4cm}
 Let $G$ be a non-abelian group and let  $X$  be a  maximal subset of pairwise non-commuting
elements of $G$.  The cardinality of such a
  subset is denoted by $\omega(G)$. Also $\omega(G)$ is the
  maximal clique size in the non-commuting graph of a group $G$.
  Let $Z(G)$ be the center of $G$. The non-commuting graph of a group $G$ is a graph with
  $G \backslash Z(G)$ as the vertices and join two distinct vertices $x$ and
  $y$, whenever $xy\neq yx$. By a famous result of B. H. Neumann \cite{Neumann},
answering a question
 of P. Erd$\ddot{o}$s,  the finiteness of $\omega (G)$ in
 $G$ is equivalent to the finiteness of the factor group
 $G/Z(G)$.
 Pyber \cite{Pyber} has shown that there is some constant $c$ such that $|G : Z(G)|\leq c^{\omega
 (G)}$.\break
Moreover various attempts have been made to find $\omega(G)$ for
some groups $G$, see for example \cite{Abdollahi}, \cite{Azad},
\cite{AFO}, \cite{Chin}, \cite{Fouladi} and \cite{Fo}.\\
In this paper we find $\omega(G)$ for any finite minimal
non-abelian group. A minimal non-abelian group is a non-abelian
group such that all its proper subgroups are abelian. A useful
structure of these groups is given in [\ref{Huppert}, Aufgaben
III. 5.14], which states that the order of a minimal non-abelian
group $G$ has at most two distinct prime divisors and if $G$ is
not a $p$-group, then only one of its sylow subgroup is normal in
$G$. Following \cite[Lemma 116.1 (a)]{Berk}, we see that
$\omega(G)=p+1$ for any finite minimal non-abelian $p$-group $G$.
Therefore in this paper we assume that $G$ is a finite minimal
non-abelian group, which is not a $p$-group and we show that
$\omega(G)=|Q|+1$, where $Q$ is  the normal $q$-Sylow subgroup of
$G$.\\
Throughout this paper we use the following notation. $p$ denotes a
prime number. $\ce_{G}(x)$ is the centralizer of an element $x$ in
a group $G$. A group $G$ is called an $AC$-group if the
centralizer of every non-central element of $G$ is abelian.

\vspace*{0.4cm}

\section{\Large{Main result}}

\vspace*{0.4cm} First we state two following lemmas that are
needed for the main result of this paper.

\begin{lem}\label{1} The following conditions on a group $G$ are
equivalent.
\begin{itemize}
\item[(i)] $G$ is an $AC$-group.
\item[(ii)] If $[x,y]=1$ then
$\ce_G(x)=\ce_G(y)$, where $x,y\in G\backslash Z(G)$.
\end{itemize}
\end{lem}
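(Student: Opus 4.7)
The plan is to prove the two implications separately, using symmetry and the abelianness of certain centralizers.

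For the implication (i) $\Rightarrow$ (ii), I would start with $x,y \in G\setminus Z(G)$ satisfying $[x,y]=1$, so that $y \in \ce_G(x)$ and $x \in \ce_G(y)$. Since $G$ is an AC-group and neither $x$ nor $y$ is central, both $\ce_G(x)$ and $\ce_G(y)$ are abelian. From the abelianness of $\ce_G(x)$ together with $y \in \ce_G(x)$, every element of $\ce_G(x)$ commutes with $y$, giving $\ce_G(x) \subseteq \ce_G(y)$. The symmetric argument with $x \in \ce_G(y)$ and $\ce_G(y)$ abelian yields $\ce_G(y) \subseteq \ce_G(x)$, so the two centralizers are equal.

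For (ii) $\Rightarrow$ (i), I would pick any $x \in G\setminus Z(G)$ and two arbitrary elements $a,b \in \ce_G(x)$, and show $[a,b]=1$. If either $a$ or $b$ lies in $Z(G)$ the conclusion is immediate, so the only substantive case is $a,b \in G\setminus Z(G)$. In that case $[x,a]=1=[x,b]$, and applying hypothesis (ii) to the pairs $(x,a)$ and $(x,b)$ gives $\ce_G(a) = \ce_G(x) = \ce_G(b)$. In particular $b \in \ce_G(x) = \ce_G(a)$, so $[a,b]=1$, and $\ce_G(x)$ is abelian.

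I do not anticipate a significant obstacle here; the argument is essentially a symmetry bookkeeping exercise. The only small subtlety is remembering to treat separately the case in (ii) $\Rightarrow$ (i) where one of the elements of $\ce_G(x)$ happens to be central, since then hypothesis (ii) does not apply directly to it, but the conclusion is trivial in that case.
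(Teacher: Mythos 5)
Your argument is correct and is exactly the ``straightforward'' proof the paper omits (it merely cites [\ref{R}, Lemma 3.2]): for (i)$\Rightarrow$(ii) the abelianness of $\ce_G(x)$ and $\ce_G(y)$ gives the two containments, and for (ii)$\Rightarrow$(i) applying the hypothesis to the pairs $(x,a)$ and $(x,b)$ forces $\ce_G(a)=\ce_G(x)=\ce_G(b)$, with the case of a central element of $\ce_G(x)$ handled trivially, just as you note. There is nothing to add.
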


\begin{proof}
This is straightforward. See also [\ref{R}, Lemma 3.2].
\end{proof}

\begin{lem}\label{2} \cite[Lemma 2.3]{AFO} Let $G$ be  an $AC$-group.
\begin{itemize}
\item[(i)] If $a, b \in G\backslash Z(G)$ with distinct
centralizers, then $\ce_G(a)\cap \ce_G(b)=Z(G)$.
 \item[(ii)] If $G=\cup_{i=1}^k \ce_G(a_i)$, where $\ce_G(a_i)$ and $\ce_G(a_j)$
are distinct proper subgroups of $G$ for $1\leq i<j\leq k$, then
$\{a_1\dots a_k\}$ is a maximal set of pairwise non-commuting
elements in $G$.
\end{itemize}
\end{lem}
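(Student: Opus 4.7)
My plan is to prove both parts using Lemma~\ref{1} together with the defining property of AC-groups that centralizers of non-central elements are abelian.

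For part (i), I would show $\ce_G(a)\cap\ce_G(b)\subseteq Z(G)$ by contradiction: suppose $x$ lies in both centralizers but $x\notin Z(G)$. Since $[x,a]=1$ with $x,a$ both non-central, Lemma~\ref{1} yields $\ce_G(x)=\ce_G(a)$; the same argument with $b$ gives $\ce_G(x)=\ce_G(b)$. This forces $\ce_G(a)=\ce_G(b)$, contradicting the hypothesis of distinct centralizers. The reverse inclusion $Z(G)\subseteq \ce_G(a)\cap\ce_G(b)$ is immediate.

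For part (ii), I would separately verify pairwise non-commutativity of $\{a_1,\dots,a_k\}$ and maximum cardinality. Since each $\ce_G(a_i)$ is a proper subgroup, $a_i\notin Z(G)$, so Lemma~\ref{1} applies. If $[a_i,a_j]=1$ for some $i\neq j$, then $\ce_G(a_i)=\ce_G(a_j)$, contrary to the assumed distinctness; hence the $a_i$ are distinct and pairwise non-commuting. For the cardinality bound, let $Y=\{y_1,\dots,y_m\}$ be any pairwise non-commuting subset of $G$. The cover $G=\bigcup_{i=1}^{k}\ce_G(a_i)$ assigns each $y_j$ to some $\ce_G(a_{i(j)})$. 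Because $G$ is an AC-group and each $a_i$ is non-central, every $\ce_G(a_i)$ is abelian, so two distinct elements of $Y$ lying in the same $\ce_G(a_i)$ would commute, contradicting pairwise non-commutativity. Thus $j\mapsto i(j)$ is injective, giving $m\leq k$.

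The only delicate point is verifying the non-centrality prerequisites before invoking Lemma~\ref{1}; in both parts this follows directly from the given hypotheses (distinct centralizers in (i), proper centralizers in (ii)), so I do not anticipate a substantial obstacle. Note also that one could replace the centralizer-equality invocation in the first half of (ii) by a direct application of part (i): from $[a_i,a_j]=1$ one gets $a_j\in \ce_G(a_i)\cap\ce_G(a_j)=Z(G)$, again contradicting $\ce_G(a_j)\neq G$.
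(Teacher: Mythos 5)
Your proof is correct: both the use of Lemma~\ref{1} to get part (i) and the pairwise non-commutativity in (ii), and the counting argument (mapping any pairwise non-commuting set injectively into the abelian centralizers covering $G$) are exactly the standard argument for this result. Note that the paper itself gives no proof here --- the lemma is quoted from \cite[Lemma 2.3]{AFO} --- so there is nothing to diverge from; your argument is the natural one and fills that citation in correctly.
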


Now we find $\omega(G)$, for a finite minimal
non-abelian group $G$ in which $G$ is not a $p$-group.\\
The following theorem gives a structure for finite minimal
non-abelian groups which play  an important role in our proof of
the main Theorem.

\begin{thm}\label{4}\textup{[\ref{Huppert}, Aufgaben III. 5.14 ]}. Let $G$ be a finite
minimal non-abelian group. Then
\begin{itemize}
\item[(i)] the order of $G$ has at most two distinct prime divisors,
\item[(ii)] if $|G|$ is not a power of a prime then $G=PQ$, where $P$ is a
cyclic $p$-Sylow subgroup of $G$ and $Q$ is the elementary abelian
minimal normal $q$-Sylow subgroup of $G$.
\end{itemize}
\end{thm}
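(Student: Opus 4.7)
The plan is to derive the theorem from Schmidt's classification of minimal non-nilpotent groups and then sharpen the conclusion using the fact that every proper subgroup of $G$ is abelian (not merely nilpotent). Before invoking Schmidt, I would verify that $G$ is solvable: every proper subgroup is abelian and hence nilpotent, so either $G$ is itself nilpotent or it lies in the class of Schmidt groups, and in either case it is solvable.

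In the nilpotent case, $G$ is the internal direct product of its Sylow subgroups; since $G$ is non-abelian, some Sylow factor is non-abelian, and unless $|G|$ has only one prime divisor that factor would be a proper non-abelian subgroup of $G$, violating the minimality hypothesis. Hence $G$ is a $p$-group, accounting for~(i). In the non-nilpotent case, Schmidt's theorem yields $|G| = p^{a}q^{b}$ for exactly two primes, a unique normal Sylow subgroup (take this to be the $q$-Sylow $Q$), a cyclic complementary Sylow subgroup $P$, the decomposition $G = PQ$, and $\Phi(Q)\le Z(G)$. This establishes (i) together with the cyclicity of $P$ in~(ii).

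What remains is to show that $Q$ is elementary abelian and minimal normal in $G$. Since $Q$ is a proper abelian subgroup of $G$, $\Phi(Q) = Q^{q}$; I would argue by contradiction that if $Q^{q}\neq 1$, then $G/Q^{q}$ is a proper quotient which is non-abelian (otherwise $G' \le Q^{q} \le Z(G)$ forces $G$ to be nilpotent of class $2$) and which inherits the minimal non-abelian property via the subgroup correspondence, supplying an induction on $|G|$. Alternatively, for any $y \in Q\setminus Z(G)$ the subgroup $\langle y\rangle^{G} P$ is either proper (and hence abelian, forcing $y \in Z(G)$, a contradiction) or equal to $G$, so $\langle y\rangle^{G} = Q$, which constrains the exponent. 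For minimal normality, any proper non-trivial $G$-invariant $N < Q$ produces the proper, hence abelian, subgroup $NP$; then $P$ centralizes $N$, and since $Q$ is abelian this places $N \le Z(G)$, contradicting the irreducibility of the $P$-action on $Q/\Phi(Q) = Q$ supplied by Schmidt's theorem.

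The principal obstacle is the exponent reduction: Schmidt's theorem supplies only $\Phi(Q)\le Z(G)$, so passing from ``$\Phi(Q)$ central'' to ``$\Phi(Q) = 1$'' is where the minimal non-abelian hypothesis, rather than merely minimal non-nilpotentness, must do real work. The cleanest way to close that gap is the induction on $|G|$ via $G/\Phi(Q)$ outlined above, since Schmidt alone cannot distinguish a non-abelian $Q$ (for instance, cyclic of order $q^{2}$ acted on trivially by $P$ outside $\Phi(Q)$) from an elementary abelian one.
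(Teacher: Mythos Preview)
The paper does not prove this theorem at all; it is quoted as an exercise from Huppert and used as a black box. So there is no proof in the paper to compare against, and your outline via Schmidt's classification is the standard route. The nilpotent case, part~(i), the cyclicity of $P$, and the minimal-normality argument for $Q$ (once $\Phi(Q)=1$ is known) are all handled correctly.

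The genuine gap is precisely at the step you flag as the principal obstacle, and neither of your two suggestions closes it as written. Passing to $G/\Phi(Q)$ and invoking induction tells you only that $Q/\Phi(Q)$ is elementary abelian and minimal normal in $G/\Phi(Q)$; both statements are automatic and yield no contradiction with $\Phi(Q)\neq 1$, so the induction by itself proves nothing new. Your alternative observation that $\langle y\rangle^{G}=Q$ for $y\in Q\setminus Z(G)$ shows only that every non-central element of $Q$ has order $\exp(Q)$, hence $Q^{q}\le Z(G)\cap Q$; this is exactly Schmidt's conclusion $\Phi(Q)\le Z(G)$ and does not force $\exp(Q)=q$. (Incidentally, your illustrative example is vacuous: a nontrivial automorphism of $C_{q^{2}}$ of order prime to $q$ never fixes the subgroup of order $q$ pointwise.)

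What is missing is the one-line coprime-action fact $Q=C_{Q}(P)\times[Q,P]$ for the abelian group $Q$. If $C_{Q}(P)\neq 1$ then $[Q,P]$ is a proper normal subgroup of $G$, so $[Q,P]P$ is proper and hence abelian, giving $[Q,P]\le C_{Q}(P)\cap[Q,P]=1$ and forcing $G$ abelian. Thus $C_{Q}(P)=Z(G)\cap Q=1$, and now your own argument applied to $N=\Phi(Q)$ gives $\Phi(Q)\le C_{Q}(P)=1$ immediately; minimal normality of $Q$ then follows from the same argument applied to an arbitrary proper normal $N<Q$.
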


\begin{lem}\label{7} Let $G$ be a finite minimal non-abelian group
and $G=PQ$, where $P$ is a cyclic $p$-Sylow subgroup of $G$ and
$Q$ is the elementary abelian minimal normal $q$-Sylow subgroup of
$G$. Then
\begin{itemize}
\item[(i)] $G'=Q$,
\item[(ii)] $G'\cap Z(G)=1$ and so $Z(G)$ is a $p$-subgroup of
$G$,
\item[(iii)] $\ce_{G}(P)=\N_{G}(P)=P$,
\item[(iv)] $\ce_{G}(b)=Z(G)\times Q$ for any $1\neq b \in Q$.
\end{itemize}
\end{lem}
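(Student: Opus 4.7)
The plan is to handle the four parts in sequence, relying throughout on two tools: the minimality of $Q$ as a normal subgroup of $G$ (so any $G$-normal subgroup contained in $Q$ equals $1$ or $Q$) together with the Dedekind modular law applied to $G=PQ$. For (i), $G/Q \cong P$ is abelian, so $G' \leq Q$; since $G' \triangleleft G$ and $G$ is non-abelian, minimality of $Q$ yields $G' = Q$. For (ii), $G' \cap Z(G) = Q \cap Z(G)$ is normal in $G$ and contained in $Q$, hence is $1$ or $Q$; the latter would give $Q \leq Z(G)$, making $G/Z(G)$ a quotient of the cyclic $p$-group $G/Q$, so $G$ would be abelian. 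Therefore $G' \cap Z(G) = 1$, and then $Z(G) \hookrightarrow G/Q \cong P$ shows $Z(G)$ is a $p$-group.

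For (iii), I would apply the modular law to the pair $P \leq \N_G(P)$ inside $G = PQ$ to write $\N_G(P) = P \cdot K$ with $K := \N_G(P) \cap Q$. The key verification is that $K \triangleleft G$: the abelian group $Q$ normalizes $K$, and for $x \in P$, $y \in K$ the one-line computation $(x^{-1}yx)^{-1} P (x^{-1}yx) = x^{-1} P x = P$ puts $x^{-1}yx$ back in $K$. Minimality of $Q$ then forces $K = 1$ or $K = Q$; the latter would make $P$ normal in $G$ and hence $G = P \times Q$ abelian, a contradiction. So $\N_G(P) = P$, and since $P \leq \ce_G(P) \leq \N_G(P)$ this also yields $\ce_G(P) = P$.

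For (iv), fix $1 \neq b \in Q$ and set $C = \ce_G(b)$. Since $Q$ is abelian, $Q \leq C$, and the modular law gives $C = P_0 Q$ with $P_0 := C \cap P$. The heart of the argument is to show $P_0 \leq Z(G)$: I would introduce $K_0 := \ce_Q(P_0)$, observe that $b \in K_0$ so $K_0$ is non-trivial, and verify $K_0 \triangleleft G$ by the same template as in (iii) (using that $P$ centralizes $P_0$, so conjugation by elements of $P$ preserves the fixed-point set of $P_0$ on $Q$). Minimality then gives $K_0 = Q$, so $P_0$ centralizes $Q$; combined with $P_0 \leq P$ abelian, $P_0$ centralizes all of $G$, hence $P_0 \leq Z(G)$. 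Since also $Z(G) \leq C$, sandwiching gives $C = P_0 Q = Z(G) Q$, which is a direct product by (ii). The hard part will be organisational: in both (iii) and (iv) one must verify normality of the auxiliary subgroup in all of $G$ rather than in some obvious overgroup, since only that unlocks the minimality of $Q$; everything else is standard manipulation.
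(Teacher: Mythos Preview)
Your proposal is correct, and parts (i) and (ii) coincide with the paper's argument. In (iii) and (iv), however, you take a genuinely different route. For (iii) the paper argues elementwise: if some $x\in\N_G(P)$ has order $q$ then $x\in Q$, and with $P=\langle a\rangle$ one has $[a,x]\in P\cap G'=P\cap Q=1$, so $x$ centralizes $P$ and $Q$, forcing $x\in Z(G)\cap Q=1$. For (iv) the paper invokes the minimal non-abelian hypothesis directly: since $b\notin Z(G)$ the subgroup $\ce_G(b)$ is proper, hence \emph{abelian}, and writing $\ce_G(b)=Q\times P_0$ immediately gives $[P_0,Q]=1$; then $P_0\leq P^g$ and $G=P^gQ$ put $P_0$ into $Z(G)$. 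Your approach instead recycles the same template twice: isolate an auxiliary subgroup of $Q$ (namely $\N_G(P)\cap Q$ in (iii) and $\ce_Q(P_0)$ in (iv)), check it is normal in $G$, and let the minimality of $Q$ do the work. The trade-off is that the paper's proof of (iv) is shorter and exploits the defining hypothesis ``all proper subgroups are abelian'' in an essential way, whereas your argument never uses that hypothesis at all in (iii)--(iv) and relies only on $Q$ being minimal normal and $P$ cyclic; so your version is a bit more self-contained and would survive in a mildly more general setting, at the cost of the extra normality verifications.
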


\begin{proof}(i) $G'\leq Q$ since $G/Q\cong P$. Now the
result follows from the fact that $Q$ is minimal normal subgroup
of $G$.\\
(ii) We have $G'\cap Z(G)=Q \cap Z(G)$ is normal in $G$ and if
$Q\leq Z(G)$, then $G$ is abelian which is impossible. This yields
that  $Q \cap Z(G)=1$ and so
$Z(G)$ is a $p$-subgroup of $G$.\\
(iii) If $P\lneqq \N_{G}(P)$, then  there exists $ x \in
\N_{G}(P)$ of order $q$. Hence $x \in Q$. Let $P=\la a \ra$, then
$[a, x] \in P$ and so $[a, x]=1$ by (i). This implies that $x \in
Z(G)$. Therefore $x=1$ by
(ii) and so $\N_{G}(P)=P$. The rest is obvious.\\
(iv) If $1\neq b \in Q$, then by (ii), $b \in Q\setminus Z(G)$
 and so $\ce_{G}(b)\lneqq G$ is abelian. Since $
Q \leq \ce_{G}(b)$, we may write $\ce_{G}(b) \cong Q\times P_{0}$,
where $P_{0}$ is the $p$-Sylow subgroup of $\ce_{G}(b)$. Therefore
$[P_{0}, Q]=1$. Moreover $P_{0}\leq P^g$ for some $g \in G$ and
$G=P^{g}Q$, which implies that $P_{0}\leq Z(G)$. Furthermore
$Z(G)\leq P_{0}$ by (ii) and the fact that $Z(G)\leq \ce_{G}(b)$.
This complete the proof.
\end{proof}

\begin{thm}\label{8} Let $G$ be a finite minimal non-abelian group
and $G=PQ$, where $P$ is a cyclic $p$-Sylow subgroup of $G$ and
$Q$ is the elementary abelian minimal normal $q$-Sylow subgroup of
$G$. Then $\omega(G)=|Q|+1$.
\end{thm}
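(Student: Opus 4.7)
The plan is to pin down all centralizers $\ce_G(x)$ for $x \in G \setminus Z(G)$, exploit that $G$ is an AC-group, and then assemble the answer via Lemmas \ref{1} and \ref{2}. First, note $G$ is an AC-group: every proper subgroup of a minimal non-abelian group is abelian, and for non-central $x$ the centralizer $\ce_G(x)$ is a proper subgroup of $G$.

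Next I would classify the centralizers of non-central elements. Using the unique decomposition $x = x_p x_q$ of $x \in G$ into commuting $p$- and $q$-parts (so $x_q \in Q$ and $x_p$ lies in some Sylow $p$-subgroup $P^g$), two cases arise. If $x_q \neq 1$, then $x_p$ centralizes $x_q$, so by Lemma \ref{7}(iv), $x_p \in \ce_G(x_q) = Z(G) \times Q$; since $x_p$ has $p$-power order and $Q \cap Z(G) = 1$ by Lemma \ref{7}(ii), this forces $x_p \in Z(G)$, hence $x \in Z(G) \times Q$ and $\ce_G(x) = \ce_G(x_q) = Z(G) \times Q$. If $x_q = 1$, then $x \in P^g$; since $P^g$ is abelian, $P^g \subseteq \ce_G(x)$, and the maximal-abelian property of $P^g$ (which follows from $\ce_G(P^g) = P^g$ in Lemma \ref{7}(iii)) forces $\ce_G(x) = P^g$.

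Thus the distinct centralizers of non-central elements are $Z(G) \times Q$ together with the conjugates $P^{g_1}, \ldots, P^{g_k}$ of $P$. By Lemma \ref{7}(iii), $\N_G(P) = P$, so $k = [G:P] = |Q|$. These are all distinct, since $Z(G) \subsetneq P$ (otherwise $G$ would be abelian, contradicting minimality) makes $Z(G) \times Q$ have a strictly smaller $p$-part than any $P^{g_i}$. For the lower bound, pick $a_0 \in Q \setminus \{1\}$ and $a_i \in P^{g_i} \setminus Z(G)$ for $1 \leq i \leq |Q|$. The case analysis above yields $G = (Z(G) \times Q) \cup \bigcup_{i=1}^{|Q|} P^{g_i} = \bigcup_{i=0}^{|Q|} \ce_G(a_i)$ with distinct proper centralizers, so Lemma \ref{2}(ii) gives $\omega(G) \geq |Q| + 1$.

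For the upper bound, I would use that in any pairwise non-commuting subset distinct elements must have distinct centralizers (else $y \in \ce_G(y) = \ce_G(x)$ implies $[x,y]=1$); since only $|Q| + 1$ centralizers of non-central elements exist, $\omega(G) \leq |Q|+1$. The main obstacle I expect is the classification step — specifically, showing that $x_q \neq 1$ forces $x_p \in Z(G)$, and that for $x_q = 1$ one has $\ce_G(x) = P^g$ rather than something strictly larger — since that is where several items of Lemma \ref{7} must be combined; the rest is bookkeeping and a direct application of Lemma \ref{2}(ii).
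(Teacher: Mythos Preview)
Your proof is correct and follows the same overall strategy as the paper: show $G$ is an AC-group, exhibit the centralizers of non-central elements as $Z(G)\times Q$ together with the $|Q|$ conjugates of $P$, check that these cover $G$, and invoke Lemma~\ref{2}(ii). The only substantive difference is in how the covering $G=\bigcup_i \ce_G(a_i)$ is established. The paper does this by a cardinality computation: since $G$ is an AC-group, Lemma~\ref{2}(i) forces all pairwise intersections of the listed centralizers to equal $Z(G)$, so inclusion--exclusion gives the union size as $|Q|\bigl(|P|-|Z(G)|\bigr)+|Z(G)|\,|Q|=|P|\,|Q|=|G|$. You instead classify every non-central element directly via its commuting $p$--$q$ decomposition $x=x_p x_q$, which is slightly more work but has the benefit of pinning down $\ce_G(x)$ explicitly for each $x$. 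Your separate upper-bound argument is redundant, since Lemma~\ref{2}(ii) already asserts that $\{a_0,\dots,a_{|Q|}\}$ is a \emph{maximal} non-commuting set (hence $\omega(G)=|Q|+1$, not merely $\geq$), but it does no harm.
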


\begin{proof} Let $|G|=p^{\alpha}q^{\beta}$ and $P_{1}=P,
 P_{2} , \dots , P_{m}$ be all distinct  $p$-Sylow subgroups of $G$ and
$P_{i}=\la a_{i} \ra$ for $1\leq i \leq m$. Obviously
$m=q^{\beta}$ and $\ce_{G}(a_{i})=P_{i}$ by Lemma \ref{7}(iii)
and so  $a_{i} \notin Z(G)$ since $G=P_{i}Q$. Now let $1\neq b \in Q
$, then $ b \notin Z(G)$ by Lemma \ref{7}(ii). Moreover for $1\leq
i \leq m$  we have $\ce_{G}(a_{i})\neq\ce_{G}(b)$, for otherwise
we see that $b \in Z(G)$, which is a contradiction. Now we
calculate the order of $A=\ce_{G}(a_{1})\cup\dots
\cup\ce_{G}(a_{m})\cup\ce_{G}(b)$. By Lemma \ref{2}(i) and the
fact that $G$ is an AC-group, we see that $|A|=\sum_{i=1}^m
(|\ce_{G}(a_i)|-|Z(G)|)+|\ce_{G}(b)|$. Moreover by Lemma
\ref{7}(iii), (iv) we have $|\ce_{G}(a_{i})|=|P_{i}|=p^{\alpha}$
and $|\ce_{G}(b)|=|Z(G)| q^{\beta}$. Therefore $|A|=|G|.$ This
yields that
$G=\ce_{G}(a_{1})\cup\dots\cup\ce_{G}(a_{m})\cup\ce_{G}(b)$, and
so $\omega(G)=|Q|+1$ by Lemma \ref{2}(ii).
\end{proof}


 \vspace*{1cm}


\begin{thebibliography}{99}

\bibitem{Abdollahi}A. Abdollahi, A. Akbari and H. R. Maimani, Non-commuting graph of a
group, {\it J. Algbera} {\bf 298} (2006), no. 2, 468-492.

\bibitem{Azad}A. Azad, Cheryl E. Praeger, Maximal subsets of  pairwise
 non-commuting elements of three-dimensional general linear
 groups, {\it Bull. Aus. Math. Soc.} {\bf 80} (2009), no. 1, 91-104.

\bibitem{AFO}A. Azad, S. Fouladi and R. Orfi, Maximal subsets of  pairwise
 non-commuting elements of some finite $p$-groups, {\it
 Bull. Iran. Math. Soc.} (to be appear)

\bibitem{Berkovich}\label{Berkovich} Y. Berkovich, {\it Groups of Prime Power
Order} Vol. 1, Walter de Gruyter, Berlin, 2008.

\bibitem{Berk} Y. Berkovich and Z. Janko, {\it Groups of Prime Power
Order} Vol. 3, Walter de Gruyter, Berlin, 2011.

\bibitem{Isaacs}E. A. Bertram, Some applications of graph theory to
finite groups, {\it Discrete Math.} {\bf 44} (1983), no. 1, 31-43.

\bibitem{Chin} A. M. Y. Chin, On non-commuting sets in an extraspecial $p$-group, {\it J. Group Theory} {\bf 8} (2005), no. 2, 189-194.

\bibitem{Fouladi} S. Fouladi and R. Orfi, Maximal subsets of  pairwise
 non-commuting elements of some $p$-groups of maximal class, {\it
 Bull. Aust. Math. Soc.} {\bf 84} (2011), no.3, 447-451.

\bibitem{Fo} S. Fouladi and R. Orfi, Maximum size of subsets of pairwise
 non-commuting elements in finite metacyclic $p$-groups, {\it
 Bull. Aust. Math. Soc.} (to be appear)


\bibitem{HuppertI}\label{Huppert}B. Huppert, {\it Endliche Gruppen, I} (Springer-Verlag, Berlin, 1967).

\bibitem{Ta} M. Mashkouri and B. Taeri , On a graph associated to groups, {\it  Bull. Malays. Math. Sci. Soc.} (2) {\bf 34} (2011), no. 3, 553-560.

\bibitem{Neumann}B. H. Neumann, A problem of Paul Erd$\ddot o$s on groups,
{\it J. Aust. Math. Soc. Ser. A} {\bf 21} (1976), no. 4, 467-472.

\bibitem{Pyber} L. Pyber, The number of pairwise non-commuting
elements and the index of the centre in a finite group, {\it J.
London Math. Soc.}  {\bf 35} (1987), no. 2, 287-295.

\bibitem{Rocke}\label{R} D. M. Rocke, $p$-groups with abelian centralizers,
{\it Proc. London math. Soc.}  {\bf 30} (1975), no. 3, 55-57.


\end{thebibliography}
\end{document}